\documentclass[12pt]{article}
\textheight 22.5truecm \textwidth 14.5truecm
\setlength{\oddsidemargin}{0.35in}\setlength{\evensidemargin}{0.35in}
\setlength{\topmargin}{-.5cm}
\usepackage[utf8]{inputenc}\usepackage[english]{babel}
\usepackage[T1]{fontenc}\usepackage{amsmath}
\usepackage{amssymb}\usepackage{amsthm}
\usepackage{amsfonts}\usepackage{mathrsfs}
\usepackage{geometry}\usepackage{enumerate}

\DeclareMathOperator{\convw}{\xrightarrow[]{w}}

\newtheorem{theorem}{Theorem}[section] 
\newtheorem{definition}{Definition}[section]

%[section]

\newtheorem{proposition}{Proposition}[section]
\newtheorem{lemma}{Lemma}[section]
\newtheorem{corollary}{Corollary}[section]
%[section]
%[section]
%[section]
\makeatletter\renewcommand{\subsection}{\@startsection{subsection}{1}
{0pt}{3.25ex plus 1ex minus.2ex}{-1em}{\normalfont\normalsize\bf}}\makeatother\linespread{1.3}

\begin{document}

\title{On ortho and disjointly compact operators acting to Fr\'echet spaces}
\author{Svetlana Gorokhova$^{1}$\\ 
\small $1$ Southern Mathematical Institute VSC RAS, Vladikavkaz, Russia}
\date{\empty}
\maketitle

\begin{abstract}
{We study compactness along orthonormal (disjoint bounded) sequences for operators 
from Hilbert spaces (Banach lattices) to Fr\'echet spaces.}
\end{abstract}

{\bf Keywords:} {Hilbert space, Banach lattice, Fr\'echet space, orthonormal, disjoint}\\

{\bf MSC 2020:} 46B42, 47B01, 47B02, 47B07

\bigskip
\section{Introduction}

\hspace{4mm} 
Several kinds of compactness along orthonormal sequences were studied recently in \cite{G2024} for operators from a Hilbert space to a Banach space. 
It was followed by the investigation of compactness along bounded disjoint sequences in a Banach lattice by Emelyanov,  Erkur\c{s}un-\"Ozcan, and the author in \cite{EEG2025-2}.
In the present paper, some of results of \cite{G2024,EEG2025-2} are extended to operators whose codomains are Fr\'echet spaces.

This paper is organized as follows. Section 2 contains necessary preliminaries.
Section 3 is concerned with an othonormal criteria (Theorems \ref{a-theorem-G2024} and \ref{theorem-G2024}) of compactness of operators from a Hilbert space to a Fr\'echet space.
In section 4, we prove Theorem \ref{a-theorem-EEG2025-2} that characterizes operators from a Banach lattice to a Fr\'echet space that are 
compact along disjoint bounded sequences.

Throughout the paper: vector spaces are real; operators are linear; $\mathcal{H}$ stands for an infinite dimensional Hilbert space;
$X$ a Banach space; $B_X$ the closed unit ball of X; $Y=(Y,\tau)$ a locally convex space; ${\cal F}=({\cal F},\tau,p)$ a Fr\'echet space, whose 
locally convex topology $\tau$ is metrizable (in particular, $\tau$ is a Hausdorff topology) by a complete metric $p$; 
$E$ a Banach lattice, and $E_+$ its positive cone. By a \text{\rm d-bdd}-sequence in $E$ we
understand a disjoint bounded sequence. For further unexplained terminology and notation, we refer to \cite{ABo2006,AB2006,BD1984,Halmos1982}.

\bigskip
\section{Preliminaries}

\hspace{4mm} 
It is well known (see, for example, \cite{Halmos1982}) that a bounded linear operator between 
Hilbert spaces is compact iff it takes orthonormal sequences to norm-null ones. 
This was extended in \cite{G2024} as follows.

\begin{proposition}\label{oc-prop}
{\em (\cite[Theorem 2.3]{G2024}) An operator $T$ from $\mathcal{H}$ to $X$ is compact iff $T$ maps every orthonormal basis of $\mathcal{H}$ into a relatively compact set iff
$T$ maps every orthonormal sequence of $\mathcal{H}$ to norm-null one.}
\end{proposition}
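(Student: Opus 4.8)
\emph{Sketch of proof.} We may take $T$ to be bounded: a compact operator is bounded, and each of the other two conditions forces boundedness as well, since an unbounded $T$ would admit an orthonormal sequence $(u_n)$ with $\|Tu_n\|\to\infty$ --- pick unit $u_n$ successively in the finite-codimensional subspace $\{u_1,\dots,u_{n-1}\}^{\perp}$, on which $T$ is still unbounded, with $\|Tu_n\|\ge n$ --- and such a sequence (extended to an orthonormal basis) violates the middle condition too. The plan is to run the cycle of implications: $T$ compact $\Rightarrow$ [$T$ maps every orthonormal basis to a relatively compact set] $\Rightarrow$ [$T$ maps every orthonormal sequence to a norm-null sequence] $\Rightarrow$ $T$ compact.

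The first implication is immediate: an orthonormal basis $\mathcal{B}$ lies in $B_{\mathcal{H}}$, so $T(\mathcal{B})\subseteq\overline{T(B_{\mathcal{H}})}$, which is compact. For the second, I would argue by contradiction. If $(x_n)$ is orthonormal with $\|Tx_{n_k}\|\ge\varepsilon>0$ along a subsequence, then, extending the orthonormal set $\{x_{n_k}:k\in\mathbb{N}\}$ to an orthonormal basis $\mathcal{B}$ via Zorn's lemma, the set $\{Tx_{n_k}\}\subseteq T(\mathcal{B})$ is relatively compact, so $Tx_{n_{k_j}}\to y$ in norm along a further subsequence, with $\|y\|\ge\varepsilon$. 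But Bessel's inequality gives $x_{n_{k_j}}\rightharpoonup 0$, hence $Tx_{n_{k_j}}\rightharpoonup 0$ by the weak-to-weak continuity of the bounded operator $T$; since the weak topology of $X$ is Hausdorff, weak limits are unique, so $y=0$ --- a contradiction. Therefore $\|Tx_n\|\to 0$.

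The third implication is the crux. I would first reduce it, using the reflexivity of $\mathcal{H}$, to the claim that $T$ sends weakly null sequences to norm-null ones: for bounded $(u_n)$, pass to $u_{n_k}\rightharpoonup u$; then $u_{n_k}-u\rightharpoonup 0$, so $T(u_{n_k}-u)\to 0$ in norm, i.e. $Tu_{n_k}\to Tu$, and $\overline{T(B_{\mathcal{H}})}$ is (sequentially, hence) compact. So suppose $v_k\rightharpoonup 0$, $\|v_k\|\le M$, but $\|Tv_{k_j}\|\ge\varepsilon>0$ along a subsequence; replacing $(v_k)$ by that subsequence we may assume $\|Tv_k\|\ge\varepsilon$ for all $k$, hence $\|v_k\|\ge\varepsilon/\|T\|>0$, so the unit vectors $\hat{v}_k:=v_k/\|v_k\|$ are still weakly null. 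Now the gliding-hump step: since $\hat{v}_k\rightharpoonup 0$, the orthogonal projection of $\hat{v}_k$ onto any fixed finite-dimensional subspace is norm-null in $k$, so I can recursively pick $k_1<k_2<\cdots$ such that the projection of $w_m:=\hat{v}_{k_m}$ onto $\mathrm{span}\{w_1,\dots,w_{m-1}\}$ has norm at most $2^{-m}$; then the $w_m$ are linearly independent and their Gram--Schmidt orthonormalization $(e_m)$ satisfies $\|w_m-e_m\|\le 2^{1-m}$. Applying the hypothesis to the orthonormal sequence $(e_m)$ and using $\|Tw_m\|\le\|Te_m\|+\|T\|\,\|w_m-e_m\|$, we get $\|Tw_m\|\to 0$, whence $\|Tv_{k_m}\|=\|v_{k_m}\|\,\|Tw_m\|\le M\,\|Tw_m\|\to 0$, contradicting $\|Tv_{k_m}\|\ge\varepsilon$. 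Hence $Tv_k\to 0$, so $T$ is compact.

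I expect the main obstacle to be precisely this gliding-hump/Gram--Schmidt step --- extracting, from a bounded weakly null sequence that stays bounded away from $0$ in norm, a subsequence that in norm is a vanishing perturbation of a genuine orthonormal sequence, and verifying the estimate $\|w_m-e_m\|\le 2^{1-m}$ that renders the Gram--Schmidt correction harmless. The remaining ingredients --- Bessel's inequality, weak-to-weak continuity and uniqueness of weak limits, and the reflexive-domain fact that compactness of $T$ is equivalent to $T$ sending weakly null sequences to norm-null ones --- are routine.
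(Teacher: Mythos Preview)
Your proof is correct and follows essentially the same route as the paper's: the paper deduces Proposition~\ref{oc-prop} from its Theorem~\ref{theorem-G2024} (the Fr\'echet-space generalization), whose proof runs the same cycle of implications, uses the same extension-to-a-basis plus weak-to-weak continuity argument for the middle implication, and for the key implication likewise passes to a weakly convergent subsequence in $B_{\mathcal{H}}$, subtracts the limit, and extracts an almost-orthonormal subsequence via Gram--Schmidt. The only cosmetic difference is that the paper packages the almost-orthogonality step as Lemma~\ref{wc-prop} (a subsequence with vanishing mutual inner products) before orthogonalizing, whereas you obtain it directly by your gliding-hump projection bound.
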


\noindent
By considering an operator from \cite[p.292]{Halmos1982}, one can see that it is not possible to replace "maps every orthonormal basis"
by "maps some orthonormal basis" in Proposition \ref{oc-prop}. 
The key ingredient of the proof of Theorem 2.3 in \cite{G2024} is the following proposition.

\begin{proposition}\label{b-prop-G2024}
{\em (\cite[Lemma 1.2]{G2024}) An operator $T:\mathcal{H}\to X$ is bounded iff $T$ is bounded along orthonormal sequences.}
\end{proposition}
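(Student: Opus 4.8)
The plan is to prove the nontrivial implication by contraposition: assuming $T$ is unbounded, I would construct an orthonormal sequence $(f_k)$ in $\mathcal{H}$ with $\|Tf_k\|\ge k$ for all $k$, which exhibits a failure of boundedness along orthonormal sequences. The forward implication needs no work: an orthonormal sequence is norm-bounded (by $1$), and a bounded operator carries bounded sets to bounded sets.

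The crucial observation for the construction is that, although $T$ need not be bounded on all of $\mathcal{H}$, its restriction to any finite-dimensional subspace $F\subseteq\mathcal{H}$ is automatically bounded, with finite norm $C_F:=\sup\{\|Tx\|:x\in F,\ \|x\|\le 1\}$. Hence, decomposing a unit vector $v$ as $v=v_F+v'$ with $v_F\in F$ and $v'\in F^{\perp}$, we get $\|v'\|\le 1$ and, by the triangle inequality, $\|Tv'\|\ge\|Tv\|-\|Tv_F\|\ge\|Tv\|-C_F$; normalizing $v'$ yields a unit vector $f\perp F$ with $\|Tf\|=\|Tv'\|/\|v'\|\ge\|Tv'\|\ge\|Tv\|-C_F$.

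Then I would run the induction: given orthonormal $f_1,\dots,f_{k-1}$ with $\|Tf_j\|\ge j$, put $F:=\mathrm{span}\{f_1,\dots,f_{k-1}\}$ and $C:=C_F<\infty$; since $T$ is unbounded, choose a unit vector $v$ with $\|Tv\|\ge k+C$ (note $v\notin F$, as otherwise $\|Tv\|\le C$, so $v'\ne 0$ above), and set $f_k:=v'/\|v'\|$. This $f_k$ is a unit vector orthogonal to $f_1,\dots,f_{k-1}$ with $\|Tf_k\|\ge\|Tv\|-C\ge k$; the case $k=1$ uses $F=\{0\}$, $C=0$. The resulting orthonormal sequence satisfies $\|Tf_k\|\to\infty$. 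Here I use that $\mathcal{H}$ is infinite-dimensional, so that $F\ne\mathcal{H}$ at each stage; weak compactness of the unit ball of $\mathcal{H}$ is not needed for this argument.

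The only genuinely delicate point is exactly what the restriction-to-finite-dimensions trick is designed to overcome: because $T$ is not assumed bounded, one cannot argue that vectors close in norm have images close in norm, so a naive Gram--Schmidt orthogonalization of a sequence of unit vectors $(u_n)$ with $\|Tu_n\|\to\infty$ may fail --- the correction needed to make $u_n$ genuinely orthonormal could be norm-small yet $T$-large. Splitting off the finite-dimensional span of the already chosen vectors, on which $T$ is controlled by the fixed constant $C_F$, and working in its orthogonal complement avoids this difficulty completely.
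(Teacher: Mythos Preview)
Your argument is correct. The inductive construction is clean, and you have correctly isolated the one nontrivial point: since $T$ is bounded on the finite-dimensional span $F$ of the already chosen vectors, projecting a new ``bad'' unit vector $v$ onto $F^{\perp}$ costs at most the fixed constant $C_F$, and normalizing the orthogonal part can only help because $\|v'\|\le 1$.

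The paper's own proof (of the more general Theorem~\ref{set bounded to TVS}, of which this proposition is the singleton Banach-space case) takes a different route. Rather than building the orthonormal sequence one step at a time, it argues by contradiction: assuming $T$ is unbounded but bounded along orthonormal sequences, pick a linearly independent normalized sequence $(x_n)$ with $\|Tx_n\|\ge 2^n$, run ordinary Gram--Schmidt to obtain orthonormal $(u_n)$ with $x_n\in\mathrm{span}\{u_1,\dots,u_n\}$, and then use the assumed bound $\|Tu_k\|\le M$ together with $|(x_n,u_k)|\le 1$ to deduce $\|Tx_n\|\le nM$, contradicting the exponential growth. So the paper \emph{does} use the naive Gram--Schmidt you warned against, but sidesteps the ``small correction, large $T$-image'' issue by front-loading an exponential blow-up that survives the $n$-term triangle inequality. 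Your approach is more constructive and produces an explicit orthonormal sequence with $\|Tf_k\|\ge k$; the paper's approach is slicker and adapts directly to families of operators into locally convex spaces, where one works with a fixed neighborhood $U$ and the comparison $2^n$ versus $n\alpha^{-1}$ replaces the norm estimates.
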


In order to extend Proposition \ref{oc-prop} to operators from Hilbert spaces to Fr\'echet spaces in Theorem \ref{a-theorem-G2024}, 
we shall use the following generalization of Proposition \ref{b-prop-G2024}, which is  interesting itself. 

\begin{theorem}\label{set bounded to TVS}
{\em Let ${\cal T}$ be a set of operators from $\mathcal{H}$ to $Y$. The following conditions are equivalent.
\begin{enumerate}[$i)$]
\item
${\cal T}$ is topologically bounded, that is the set ${\cal T}B_{\mathcal{H}}=\bigcup\limits_{T\in{\cal T}}T(B_\mathcal{H})$ is $\tau$-bounded.
\item
${\cal T}{\cal E}$ is topologically bounded for every orthonormal basis ${\cal E}$ of $\mathcal{H}$.
\item
${\cal T}$ is topologically bounded along orthonormal sequences, i.e., $\{Tx_n\}_{T\in{\cal T}, n\in\mathbb{N}}$ is $\tau$-bounded 
for every orthonormal sequence $(x_n)$ in $\mathcal{H}$.
\item
${\cal T}$ is topologically bounded along bounded orthonogonal sequences.
\end{enumerate}
If, additionally, $(Y,\tau)$ is metrizable, i.e., there exists a separating family $\{p_k\}_{k\in\mathbb{N}}$ of $\tau$-continuous seminorms on $Y$
(in this case $p(x,y)=\sum\limits_{k=1}^{\infty}\frac{2^{-k}p_k(x-y)}{1+p_k(x-y)}$ is a metric that induces the topology $\tau$), then the above conditions are equivalent to
\begin{enumerate}
\item[$v)$]
${\cal T}$ is equi-$p$-continuous, that is, for every $\varepsilon>0$ there exists $\delta>0$ such that $\sup\limits_{T\in{\cal T}}p(Tx,Ty)\le\varepsilon$
whenever $\|x-y\|\le\delta$.
\end{enumerate}}
\end{theorem}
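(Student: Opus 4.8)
The plan is to establish the cycle $i)\Rightarrow ii)\Rightarrow iii)\Rightarrow i)$ together with the side implications $i)\Rightarrow iv)\Rightarrow iii)$, and then, in the metrizable case, $i)\Leftrightarrow v)$. First I would dispose of the easy steps. Since an orthonormal basis $\mathcal{E}$ lies in $B_{\mathcal H}$ we have $\mathcal{T}\mathcal{E}\subseteq\mathcal{T}B_{\mathcal H}$, giving $i)\Rightarrow ii)$; if $(y_n)$ is an orthogonal sequence with $\|y_n\|\le M$ then $\{y_n\}\subseteq MB_{\mathcal H}$ and $\{Ty_n : T\in\mathcal{T},\,n\}\subseteq M\cdot\mathcal{T}B_{\mathcal H}$ is $\tau$-bounded, giving $i)\Rightarrow iv)$; and $iv)\Rightarrow iii)$ is trivial. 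For $ii)\Rightarrow iii)$ I argue contrapositively: if $(x_n)$ is an orthonormal sequence for which $\{Tx_n : T\in\mathcal{T},\,n\}$ is not $\tau$-bounded, I complete the orthonormal set $\{x_n:n\}$ to an orthonormal basis $\mathcal{E}$ of $\mathcal H$, and then $\mathcal{T}\mathcal{E}\supseteq\{Tx_n : T\in\mathcal{T},\,n\}$ is not $\tau$-bounded, so $ii)$ fails.

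The heart of the matter is $iii)\Rightarrow i)$, which I would prove contrapositively by adapting the Gram--Schmidt argument behind Proposition \ref{b-prop-G2024} to a family of (not necessarily continuous) operators. Suppose $\mathcal{T}B_{\mathcal H}$ is not $\tau$-bounded. Since $Y$ is locally convex there is a $\tau$-continuous seminorm $q$ with $\sup\{q(T\xi):T\in\mathcal{T},\,\|\xi\|\le1\}=\infty$; using $q(T(\xi/\|\xi\|))=\|\xi\|^{-1}q(T\xi)\ge q(T\xi)$ one picks, for each $m$, a unit vector $\xi^{(m)}$ and $T^{(m)}\in\mathcal{T}$ with $q(T^{(m)}\xi^{(m)})\ge m$. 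I then construct an orthonormal sequence $(x_k)$ and operators $S_k\in\mathcal{T}$ with $q(S_kx_k)\ge k$ recursively. Having chosen $x_1,\dots,x_{k-1}$, set $C_{k-1}:=\sup\{q(Tx_j):T\in\mathcal{T},\,1\le j\le k-1\}$. If $C_{k-1}=\infty$ we are already done, since any orthonormal sequence extending $x_1,\dots,x_{k-1}$ witnesses the failure of $iii)$. Otherwise $C_{k-1}<\infty$; let $P_{k-1}$ be the orthogonal projection onto $\operatorname{span}(x_1,\dots,x_{k-1})$ and write $\xi^{(m)}=P_{k-1}\xi^{(m)}+g_m$ with $g_m\perp x_1,\dots,x_{k-1}$ and $\|g_m\|\le1$. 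By linearity of $T^{(m)}$,
\[
q\bigl(T^{(m)}g_m\bigr)\ \ge\ q\bigl(T^{(m)}\xi^{(m)}\bigr)-q\bigl(T^{(m)}P_{k-1}\xi^{(m)}\bigr)\ \ge\ m-\sum_{j=1}^{k-1}\bigl|\langle\xi^{(m)},x_j\rangle\bigr|\,q\bigl(T^{(m)}x_j\bigr)\ \ge\ m-(k-1)C_{k-1}.
\]
Choosing $m=m_k$ with $m_k-(k-1)C_{k-1}\ge k$ and $m_k>(k-1)C_{k-1}$ (the latter forces $g_{m_k}\ne0$, else $q(T^{(m_k)}\xi^{(m_k)})\le(k-1)C_{k-1}$) and putting $x_k:=g_{m_k}/\|g_{m_k}\|$, $S_k:=T^{(m_k)}$, the sequence stays orthonormal and $q(S_kx_k)=\|g_{m_k}\|^{-1}q(T^{(m_k)}g_{m_k})\ge k$. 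Either the recursion stops in the ``$C_{k-1}=\infty$'' case, or it yields an orthonormal sequence along which $q$ is unbounded on $\{Tx_k:T\in\mathcal{T},\,k\}$; either way $iii)$ fails. The step I expect to be the main obstacle is precisely this construction, and within it the dichotomy noted above: the single-operator argument would break because $q(T^{(m)}P_{k-1}\xi^{(m)})$ need not be controlled uniformly over $T\in\mathcal{T}$, but any such lack of control is itself a failure of $iii)$ for the already-chosen vectors, which is what rescues the argument.

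Finally, assume $(Y,\tau)$ is metrizable via the seminorms $\{p_k\}$ and the translation-invariant metric $p$ of the statement, so $p(Tx,Ty)=p(T(x-y),0)$ and it suffices to control $p(Tz,0)=\sum_k 2^{-k}p_k(Tz)/(1+p_k(Tz))$ for small $z$. For $v)\Rightarrow i)$, fix $k$ and apply equi-$p$-continuity with $\varepsilon=2^{-k-1}$ to obtain $\delta>0$ with $p(Tz,0)\le 2^{-k-1}$ whenever $\|z\|\le\delta$; since all summands are nonnegative, $2^{-k}p_k(Tz)/(1+p_k(Tz))\le 2^{-k-1}$, hence $p_k(Tz)\le1$, and homogeneity of $p_k$ gives $\sup\{p_k(Tw):T\in\mathcal{T},\,\|w\|\le1\}\le\delta^{-1}<\infty$; as $k$ is arbitrary, $\mathcal{T}B_{\mathcal H}$ is $\tau$-bounded. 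For $i)\Rightarrow v)$, set $M_k:=\sup\{p_k(Tw):T\in\mathcal{T},\,\|w\|\le1\}<\infty$; given $\varepsilon>0$ choose $N$ with $\sum_{j>N}2^{-j}<\varepsilon/2$ and then $\delta>0$ with $\delta\sum_{j\le N}M_j<\varepsilon/2$, so that for $\|x-y\|\le\delta$, writing $z=x-y$, one has $p(Tx,Ty)=p(Tz,0)\le\sum_{j\le N}2^{-j}p_j(Tz)+\varepsilon/2\le\delta\sum_{j\le N}M_j+\varepsilon/2\le\varepsilon$ uniformly in $T\in\mathcal{T}$. This closes all the equivalences.
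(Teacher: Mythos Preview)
Your proof is correct, but the key implication $iii)\Rightarrow i)$ is handled rather differently from the paper. The paper argues by forward contradiction: assuming $iii)$ holds but $\mathcal{T}B_{\mathcal H}$ is unbounded, it picks a circled convex neighbourhood $U$ and a normalized linearly independent sequence $(x_n)$ with operators $T_n$ satisfying $T_nx_n\notin 2^nU$, applies Gram--Schmidt once to get an orthonormal sequence $(u_n)$ with $x_n\in\operatorname{span}(u_1,\dots,u_n)$, and then uses hypothesis $iii)$ directly on $(u_n)$ to obtain a uniform bound $\alpha Tu_n\in U$ for all $T\in\mathcal T$ and all $n$. The contradiction $T_nx_n=\sum_{k\le n}(x_n,u_k)T_nu_k\in n\alpha^{-1}U\subseteq 2^nU$ (for large $n$) is then immediate, with no dichotomy or recursion needed. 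Your contrapositive construction instead manufactures the bad orthonormal sequence step by step, tracking $C_{k-1}=\sup_{T,j\le k-1}q(Tx_j)$ and splitting into the cases $C_{k-1}=\infty$ and $C_{k-1}<\infty$; this is longer but has the mild advantage that you never have to assert, as the paper does without comment, that the witnessing vectors $(x_n)$ can be taken linearly independent. For $i)\Leftrightarrow v)$ your arguments are essentially the paper's (splitting the series into a finite head and a small tail for $i)\Rightarrow v)$, and bounding a single term $2^{-k}p_k(Tz)/(1+p_k(Tz))$ for $v)\Rightarrow i)$), just organised slightly differently.
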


\begin{proof}
The implications $i)\Longrightarrow ii)\Longrightarrow iii)$ and equivalence $iii)\Longleftrightarrow iv)$ are trivial.

\medskip
$iii)\Longrightarrow i)$:\
Let ${\cal T}$ be topologically bounded along orthonormal sequences, yet
${\cal T}$ is not topologically bounded. Then there exists a circled convex $\tau$-neighborhood $U$ of zero such that $\lambda{\cal T}B_{\mathcal{H}}\not\subseteq U$ 
for every $\lambda>0$. So, there exist a linearly independent normalized sequence $(x_n)$ in $\mathcal{H}$ and a sequence $(T_n)$ in ${\cal T}$
such that $T_nx_n\notin 2^nU$ for all $n$.
By the Gram--Schmidt orthonormalization, pick an orthonormal sequence $(u_n)$ in $\mathcal{H}$ such that $x_n\in\text{\rm span}\{u_i\}_{i=1}^n$ for all $n$.
By the assumption, there exists $\alpha>0$ satisfying $\alpha Tu_n\in U$ for all $T\in{\cal T}$ and $n\in\mathbb{N}$. Then 
$$
   2^nU\not\ni T_nx_n=\sum\limits_{k=1}^{n}(x_n,u_k)T_nu_k\in n\alpha^{-1}U 
   \ \ \ \ (\forall n \in\mathbb{N}),
$$
which is absurd. The obtained contradiction completes the proof of the implication.

\medskip
Now, assume additionally that $\tau$ is induced by the metric $p(x,y)=\sum\limits_{k=1}^{\infty}\frac{2^{-k}p_k(x-y)}{1+p_k(x-y)}$.

\medskip
$i)\Longrightarrow v)$:\
Let $\varepsilon>0$ and let ${\cal T}B_{\mathcal{H}}$ be $\tau$-bounded. So, each semi-norm $p_k$ is bounded on ${\cal T}B_{\mathcal{H}}$, say
$\sup\limits_{T\in{\cal T};\|x\|\le 1}p_k(Tx)\le M_k$ for every $k\in\mathbb{N}$ (without lost of generality, $M_k\le M_{k+1}$). 
Take $n$ such that $\frac{1}{2^n}\le\frac{\varepsilon}{2}$. Then
$$
   \sup\limits_{T\in{\cal T};\|x-y\|\le\delta}p(Tx,Ty)=\sup\limits_{T\in{\cal T};\|x-y\|\le\delta}\sum\limits_{k=1}^{\infty}\frac{2^{-k}p_k(T(x-y))}{1+p_k(T(x-y))}\le
   \sum\limits_{k=1}^{\infty}\frac{2^{-k}\delta M_k}{1+\delta M_k}\le
$$
$$
   \frac{1}{2^n}+\sum\limits_{k=1}^{n}2^{-k}\delta M_k\le\frac{1}{2^n}+\frac{1}{2}\delta M_n\le\varepsilon
$$
for every $\delta$ with $0<\delta M_n\le\varepsilon$. Therefore, ${\cal T}$ is equi-$p$-continuous.

\medskip
$v)\Longrightarrow i)$:\
Let ${\cal T}$ be equi-$p$-continuous.
Assume in the contrary ${\cal T}B_{\mathcal{H}}$ is not $\tau$-bounded. Then, for some $k_0$, the semi-norm $p_{k_0}$ is not bounded on ${\cal T}B_{\mathcal{H}}$.
So, for each $\delta>0$, there exist $x_0\in\delta B_{\mathcal{H}}$  and $T\in{\cal T}$ such that $\frac{p_{k_0}(Tx_0)}{1+p_{k_0}(Tx_0)}\ge\frac{1}{2}$. 
Thus, $\sup\limits_{T\in{\cal T};\|x\|\le\delta}p(Tx,0)\ge{2^{-k_0-1}}$, a contradiction.
\end{proof}

\noindent
The following corollary of Theorem \ref{set bounded to TVS} generalizes \cite[Lemma1.2]{G2024}. 

\begin{corollary}\label{bounded to TVS}
{\em An operator $T$ from a Hilbert space to a locally convex space is topologically bounded iff $T$ carries orthonormal basises
onto topologically bounded sets iff $T$ is topologically bounded along bounded orthonogonal sequences iff $T$ is topologically bounded 
along othonormal sequences.}
\end{corollary}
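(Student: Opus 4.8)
The plan is to read off the corollary as the special case of Theorem~\ref{set bounded to TVS} in which the family $\mathcal{T}$ consists of a single operator $T$. So the first step is simply to set $\mathcal{T}=\{T\}$ and to unwind what each of the conditions $i)$--$iv)$ of the theorem says in this situation.

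With $\mathcal{T}=\{T\}$ one has $\mathcal{T}B_{\mathcal{H}}=T(B_{\mathcal{H}})$, so condition $i)$ becomes precisely ``$T$ is topologically bounded''. Likewise $\mathcal{T}\mathcal{E}=T(\mathcal{E})$ for an orthonormal basis $\mathcal{E}$, so $ii)$ becomes ``$T$ carries every orthonormal basis onto a topologically bounded set''; condition $iii)$ becomes ``$\{Tx_n\}_{n\in\mathbb{N}}$ is $\tau$-bounded for every orthonormal sequence $(x_n)$'', i.e. $T$ is topologically bounded along orthonormal sequences; and $iv)$ becomes ``$T$ is topologically bounded along bounded orthogonal sequences''. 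These are exactly the four properties whose equivalence is asserted in the corollary (listed in a slightly different order, which is immaterial). Since the chain $i)\Leftrightarrow ii)\Leftrightarrow iii)\Leftrightarrow iv)$ in Theorem~\ref{set bounded to TVS} is proved without any metrizability assumption on $(Y,\tau)$, it applies to an arbitrary locally convex codomain, which is all the corollary claims.

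There is essentially no obstacle beyond this bookkeeping: the substantive work—the Gram--Schmidt argument in the $iii)\Rightarrow i)$ step—has already been carried out in the proof of Theorem~\ref{set bounded to TVS}. The only point worth a remark is that the theorem is phrased for the fixed infinite-dimensional Hilbert space $\mathcal{H}$; should one want the corollary for an arbitrary Hilbert space, the case $\dim\mathcal{H}<\infty$ is trivial—a linear operator out of a finite-dimensional normed space is continuous, hence sends the norm-bounded ball onto a topologically bounded set, and the other three conditions then hold automatically as well—so all the content lies in the infinite-dimensional case covered by the theorem. Accordingly I would simply write: ``Apply Theorem~\ref{set bounded to TVS} with $\mathcal{T}=\{T\}$,'' optionally preceded by the one-line remark disposing of the finite-dimensional case.
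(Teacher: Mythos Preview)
Your proposal is correct and is exactly the intended argument: the paper presents this statement as an immediate corollary of Theorem~\ref{set bounded to TVS} with no separate proof, so specializing $\mathcal{T}=\{T\}$ is all there is to it. Your observations about metrizability being unnecessary for $i)$--$iv)$ and the trivial finite-dimensional case are accurate bookkeeping.
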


We shall use also the next lemma that follows easily from \cite[Lemma 1.1]{G2024}. 

\begin{lemma}\label{wc-prop}
{\em Each weakly null sequence $(x_n)$ in $\mathcal{H}$
contains a subsequence $(x_{n_k})$ such that $|(x_{n_{k_1}},x_{n_{k_2}})|\to 0$ as $k_1\ne k_2$ and $k_1,k_2\to\infty$.}
\end{lemma}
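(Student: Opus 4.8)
The plan is to produce the subsequence by a direct greedy extraction relying only on the definition of weak convergence; alternatively one may quote \cite[Lemma 1.1]{G2024} and pass to a subsequence, but the hands-on argument is short enough to carry out in full. First I would observe that, since $x_n\convw 0$, for each fixed $m$ the scalar sequence $\big((x_n,x_m)\big)_{n\in\mathbb{N}}$ tends to $0$ as $n\to\infty$ (this is merely weak convergence tested against the single vector $x_m$). Boundedness of $(x_n)$ is not even needed for this step, although $\sup_n\|x_n\|<\infty$ holds automatically by the Banach--Steinhaus theorem.

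Next I would choose indices $n_1<n_2<\cdots$ inductively. Put $n_1=1$, and suppose $n_1<\cdots<n_k$ have been selected. Since the finitely many sequences $\big((x_n,x_{n_j})\big)_{n}$, $1\le j\le k$, all tend to $0$, there is an $N\in\mathbb{N}$ with $|(x_n,x_{n_j})|<\tfrac{1}{k+1}$ for every $n\ge N$ and every $j\le k$; choose $n_{k+1}>\max\{n_k,N\}$. By construction, whenever $k_1\ne k_2$ we get $|(x_{n_{k_1}},x_{n_{k_2}})|<\tfrac{1}{\max\{k_1,k_2\}}$, and since $\max\{k_1,k_2\}\to\infty$ as $k_1,k_2\to\infty$ with $k_1\ne k_2$, the right-hand side tends to $0$; this is precisely the asserted double limit.

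There is essentially no real obstacle here. The only point that needs a little care is the bookkeeping in the induction: one must arrange that a single threshold (here $\tfrac{1}{k+1}$) simultaneously controls the inner products against \emph{all} previously chosen vectors, so that what survives in the limit is the genuine double limit over $k_1\ne k_2$ rather than merely convergence to $0$ along each fixed row. Everything else is a routine consequence of the definition of weak convergence in $\mathcal{H}$, which is also why the lemma can be said to follow easily from \cite[Lemma 1.1]{G2024}.
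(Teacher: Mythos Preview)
Your argument is correct: the greedy inductive extraction using $|(x_n,x_{n_j})|\to 0$ for each fixed $j$ is exactly the right mechanism, and your bound $|(x_{n_{k_1}},x_{n_{k_2}})|<\frac{1}{\max\{k_1,k_2\}}$ indeed yields the required double limit. The paper does not supply an independent proof here but simply records that the lemma follows from \cite[Lemma 1.1]{G2024}; your write-up is thus a self-contained elaboration of that deferred argument rather than a different approach.
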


\section{Compactness along orthonormal sequences}

\hspace{4mm}
We begin with an extension of \cite[Theorem 2.2 i)]{G2024} to operators from a Hilbert space to a Fr\'echet space. 
Note that the proof of Theorem~2.2~i) in \cite{G2024} contains a gap, which is fixed in the proof of Theorem \ref{a-theorem-G2024} below.
In this and in the next section, $({\cal F},\tau,p)$ is a Fr\'echet space such that $p(x,y)=\sum\limits_{k=1}^{\infty}\frac{2^{-k}p_k(x-y)}{1+p_k(x-y)}$ for $x,y\in {\cal F}$, 
where the topology $\tau$ is induced by the family $\{p_k\}_{k\in\mathbb{N}}$ of semi-norms on ${\cal F}$.

\begin{theorem}\label{a-theorem-G2024}
{\em Let ${\cal T}$ be a set of operators from $\mathcal{H}$ to $({\cal F},\tau,p)$. The following conditions are equivalent.
\begin{enumerate}[$i)$]
\item
${\cal T}B_\mathcal{H}$ is a relatively compact subset of ${\cal F}$.
\item
${\cal T}{\cal E}$ is relatively compact for every orthonormal basis ${\cal E}$ of $\mathcal{H}$.
\item
${\cal T}$ is compact along orthonormal sequences, that is the set $\{Tx_n: T\in{\cal T}, n\in\mathbb{N}\}$ is relatively compact 
for every orthonormal sequence $(x_n)$ in $\mathcal{H}$.
\item
${\cal T}$ is compact along bounded orthogonal sequences.
\end{enumerate}}
\end{theorem}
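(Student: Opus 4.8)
The plan is to prove the cyclic chain of implications $i)\Rightarrow ii)\Rightarrow iii)\Leftrightarrow iv)\Rightarrow i)$, mirroring the structure of Theorem \ref{set bounded to TVS} but now with relative compactness in place of topological boundedness. The implications $i)\Rightarrow ii)\Rightarrow iii)$ are immediate, since an orthonormal basis lies in $B_{\mathcal H}$ and an orthonormal sequence is a subset of an orthonormal basis, so relative compactness passes to subsets' closures. The equivalence $iii)\Leftrightarrow iv)$ is also routine: a bounded orthogonal sequence $(y_n)$ with $y_n\neq 0$ normalizes to an orthonormal sequence $x_n=y_n/\|y_n\|$, and since $\sup_n\|y_n\|<\infty$, the set $\{Ty_n\}$ is contained in the balanced convex hull of a relatively compact set scaled by a bounded factor; in a Fr\'echet (hence complete metrizable locally convex) space the closed convex balanced hull of a relatively compact set is compact, so relative compactness transfers. (Zero terms only add one point and are harmless.)

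The real work is $iii)\Rightarrow i)$. First I would observe that $iii)$ implies, in particular, $iii)$ of Theorem \ref{set bounded to TVS}, so ${\cal T}B_{\mathcal H}$ is $\tau$-bounded and, by that theorem, ${\cal T}$ is equi-$p$-continuous. Next I would show ${\cal T}B_{\mathcal H}$ is totally bounded; since ${\cal F}$ is complete, total boundedness of the set forces relative compactness of its closure. To get total boundedness, fix $\varepsilon>0$ and use equi-$p$-continuity to pick $\delta>0$ with $\sup_{T\in{\cal T}}p(Tx,Ty)\le\varepsilon/3$ whenever $\|x-y\|\le\delta$. Then I would argue by contradiction: if ${\cal T}B_{\mathcal H}$ is not totally bounded, there is an infinite $\varepsilon$-separated sequence $(T_n x_n)$ with $T_n\in{\cal T}$, $x_n\in B_{\mathcal H}$. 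Since $B_{\mathcal H}$ is weakly sequentially compact, pass to a subsequence with $x_n\rightharpoonup x$ weakly; replacing $x_n$ by $x_n-x$ (this only doubles the norm bound and, by equi-$p$-continuity applied to the difference, perturbs $T_n x_n$ by a controlled $p$-amount) reduces to a weakly null sequence. By Lemma \ref{wc-prop} extract a further subsequence with $|(x_{n_i},x_{n_j})|\to 0$ as $i\neq j$, $i,j\to\infty$; by Gram--Schmidt applied to this "almost orthogonal" sequence one can find a genuine orthonormal sequence $(u_n)$ and write each $x_{n_i}$ as a finite linear combination $\sum_k c_{ik}u_k$ whose coefficients on indices below the current block tend to $0$. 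This lets me compare $T_{n_i}x_{n_i}$ to a tail $T_{n_i}(\sum_{k\text{ large}} c_{ik}u_k)$ which, up to a small $p$-error, lies in the relatively compact set $\{T u_n: T\in{\cal T},n\in\mathbb N\}$ from $iii)$ — and a relatively compact set has no infinite $\varepsilon$-separated subset, giving the contradiction.

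The main obstacle I expect is the Gram--Schmidt / perturbation bookkeeping in $iii)\Rightarrow i)$: the subtlety is that applying Gram--Schmidt to the $x_{n_i}$'s produces vectors $u_m$ that are small norm-perturbations of normalized $x_{n_i}$'s (because the inner products are small but not zero), so one must quantify how close $x_{n_i}$ is to the part of its expansion supported on "fresh" $u_m$'s, and simultaneously control the operator images using only equi-$p$-continuity (a uniform modulus of continuity on norm-small increments) together with the relative compactness of $\{T u_n\}$. This is precisely the kind of estimate where the proof of Theorem~2.2~i) in \cite{G2024} had a gap, so I would be careful to route everything through the uniform modulus $\delta\mapsto\varepsilon$ from equi-$p$-continuity and the fact that a relatively compact subset of a metric space is totally bounded, rather than through any hand-waving about "passing to the limit". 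Once total boundedness of ${\cal T}B_{\mathcal H}$ is established, completeness of ${\cal F}$ closes the argument.
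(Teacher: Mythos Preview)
Your overall strategy coincides with the paper's: the same chain of implications, and for $iii)\Rightarrow i)$ the same reduction (separated sequence in ${\cal T}B_{\mathcal H}$, pass to a weakly convergent subsequence, subtract the weak limit, apply Lemma~\ref{wc-prop} to get almost-orthogonality, Gram--Schmidt, contradiction with $iii)$). There is, however, one genuine gap in your reduction. When you replace $x_n$ by $x_n-x$ you write that ``equi-$p$-continuity applied to the difference'' controls the perturbation of $T_nx_n$. It does not: equi-$p$-continuity bounds $p(T_nu,T_nv)$ only when $\|u-v\|$ is small, and here the difference is the fixed vector $x$ of possibly large norm, so $p(T_nx_n,T_n(x_n-x))$ is governed by $T_nx$, which varies with $n$. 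The paper fixes this by invoking $iii)$ itself: the set ${\cal T}x=\{Tx:T\in{\cal T}\}$ is relatively compact (normalize $x$ and embed it in an orthonormal sequence), so after a further subsequence $T_nx\to u$, and the triangle inequality then transfers the $p$-separation from $(T_ny_n)$ to $(T_nz_n)$ with $z_n=y_n-y$.

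In the endgame your route differs slightly from the paper's and is in fact a bit cleaner, but it needs a small correction. The paper normalizes the Gram--Schmidt output to an orthonormal sequence $(w_i)$ and must then check, by an explicit computation with the seminorms $p_k$, that the $p$-separation survives the scaling $x_{n_i}\mapsto w_i=\beta^{-1}x_{n_i}$; this is nontrivial because $p$ is not homogeneous. Your idea---observe that $z_{n_i}$ is $\|\cdot\|$-close to $c_{ii}u_i$, so by equi-$p$-continuity $T_{n_i}z_{n_i}$ is $p$-close to $c_{ii}T_{n_i}u_i$, which lies in a relatively compact set---sidesteps that computation. But you must say ``lies in the relatively compact set $[-2,2]\cdot\overline{\{Tu_n:T\in{\cal T},\,n\in\mathbb N\}}$'' rather than ``lies in $\{Tu_n\}$'': the tail is a scalar multiple of a single $T_{n_i}u_i$ with $|c_{ii}|\le\|z_{n_i}\|\le 2$, and it is the continuous image of $[-2,2]\times\overline{\{Tu_n\}}$ under scalar multiplication that is compact. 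You should also note (as the paper does implicitly) that $\|z_{n_i}\|$ is bounded below away from $0$---otherwise equi-$p$-continuity would force a subsequence of $(T_{n_i}z_{n_i})$ to be $p$-null, contradicting the separation---so that the Gram--Schmidt vectors $u_i$ are well defined.
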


\begin{proof}
The implications $i)\Longrightarrow ii)\Longrightarrow iii)$ and the equivalence $iii)\Longleftrightarrow iv)$ are trivial.

\medskip
$iii)\Longrightarrow i)$:\
Assume in the contrary ${\cal T}B_\mathcal{H}$ is not relatively $\tau$-compact. 
Then, for some $\alpha>0$, there exist a sequence $(T_n)$ in ${\cal T}$ and a sequence $(y_n)$ in 
$B_\mathcal{H}$ with $p(T_ny_n,T_my_m)\ge 3\alpha$ for all $m\ne n$. 
Since $B_\mathcal{H}$ is relatively weakly compact, by passing to a subsequence, we may assume $y_n\convw y$.
Observe that the set ${\cal T}y$ is relatively compact by assumption $iii)$. 
So, by passing to a subsequence, we may assume $p(T_ny,u)\to 0$ for some $u\in{\cal F}$. 
By Theorem \ref{set bounded to TVS}, ${\cal T}$ is equi-$p$-continuous. So, for large enough $n$ and $m$, $n\ne m$, we have
$$
   p(T_n(y_n-y),T_m(y_m-y))=p(T_ny_n-T_ny,T_my_m-T_my)\ge
$$
$$
   p(T_ny_n-u,T_my_m-u)-\alpha=p(T_ny_n,T_my_m)-\alpha\ge 3\alpha-\alpha=2\alpha.
$$
Without lost of generality, we may assume $p(T_nz_n,T_mz_m)\ge 2\alpha$ for all $m\ne n$, where $z_n:=y_n-y\convw 0$.
Due to Lemma \ref{wc-prop}, by passing to a subsequence, we may assume $|(z_n,z_m)|\to 0$ as $n\ne m$ and $n,m\to\infty$.
By the weak lower semicontinuity of the norm (cf. \cite[Lemma\,6.22]{ABo2006}), 
$2\ge\|z_n\|=\|y_n-y\|\ge C$ for all $n\ge n_0$ and some $C>0$.
By passing to a further subsequence, we may assume $\|z_n\|\to\beta\in[C,2]$.
By the Gram--Schmidt orthogonalization, there exists an orthogonal sequence $(x_{n_i})$ 
in $\mathcal{H}$ with $\|x_{n_i}-z_{n_i}\|\to 0$ as $i\to\infty$. In particular, $\|x_{n_i}\|\to\beta$.
As ${\cal T}$ is equi-$p$-continuous, we may assume $\|x_{n_i}\|=\beta$ for all $i$ and $p(T_{n_i}x_{n_i},T_{n_j}x_{n_j})\ge\alpha$ for all $i\ne j$.

Letting $w_i=\frac{x_{n_i}}{\beta}$,  we have, for $i\ne j$,
$$
   p(T_{n_i}w_i,T_{n_j}w_j)=\sum\limits_{k=1}^{\infty}\frac{2^{-k}\beta^{-1}p_k(T_{n_i}x_{n_i}-T_{n_j}x_{n_j})}{1+\beta^{-1}p_k(T_{n_i}x_{n_i}-T_{n_j}x_{n_j})}\ge
$$
$$
   \sum\limits_{k=1}^{\infty}\frac{2^{-k}\beta^{-1}p_k(T_{n_i}x_{n_i}-T_{n_j}x_{n_j})}{(\beta+\beta^{-1})+(\beta+\beta^{-1})p_k(T_{n_i}x_{n_i}-T_{n_j}x_{n_j})}= 
$$
$$
   \frac{\beta^{-1}}{\beta+\beta^{-1}}\sum\limits_{k=1}^{\infty}\frac{2^{-k}p_k(T_{n_i}x_{n_i}-T_{n_j}x_{n_j})}{1+p_k(T_{n_i}x_{n_i}-T_{n_j}x_{n_j})}=  
   \frac{\beta^{-1}}{\beta+\beta^{-1}}p(T_{n_i}x_{n_i},T_{n_j}x_{n_j})\ge\frac{\alpha\beta^{-1}}{\beta+\beta^{-1}}>0.
$$
Thus, the set $\{Tw_i: T\in{\cal T}, i\in\mathbb{N}\}$ is not relatively compact, and hence ${\cal T}$ is not compact along orthonormal sequences. A contradiction.
\end{proof}

Now, we are ready to extend \cite[Theorem 2.3]{G2024} to operators from Hilbert spaces to Fr\'echet spaces.

\begin{theorem}\label{theorem-G2024}
{\em Let $T:\mathcal{H}\to ({\cal F},\tau,p)$. The following are equivalent.
\begin{enumerate}[$i)$]
\item
$T$ is compact in the sense that $T(B_\mathcal{H})$ is a relatively compact subset of ${\cal F}$.
\item
$T$ maps every orthonormal basis of $\mathcal{H}$ into a relatively compact subset of ${\cal F}$.
\item
$T$ maps every orthonormal sequence of $\mathcal{H}$ to a $p$-null sequence.
\item
$T$ maps every bounded orthogonal sequence of $\mathcal{H}$ to a $p$-null sequence.
\end{enumerate}}
\end{theorem}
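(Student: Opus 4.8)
The plan is to deduce Theorem \ref{theorem-G2024} as the single-operator case of Theorem \ref{a-theorem-G2024}, together with the observation that a $p$-null sequence is exactly what a compact operator must produce from an orthonormal sequence. Throughout I take ${\cal T}=\{T\}$, so ${\cal T}B_\mathcal{H}=T(B_\mathcal{H})$, ${\cal T}{\cal E}=T({\cal E})$, and the sets in conditions $iii)$ and $iv)$ of Theorem \ref{a-theorem-G2024} become $\{Tx_n:n\in\mathbb{N}\}$.

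\medskip

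First I would dispose of the easy implications. $i)\Longrightarrow ii)$ is immediate since ${\cal E}\subseteq B_\mathcal{H}$. For $iii)\Longrightarrow iv)$, if $(x_n)$ is bounded and orthogonal with $\|x_n\|\le M$, split into the finitely many indices with $x_n=0$ (which contribute nothing) and normalize the rest; applying $iii)$ to the orthonormal sequence $(x_n/\|x_n\|)$ gives $p(Tx_n/\|x_n\|,0)\to 0$, and then $p(Tx_n,0)\le$ a fixed multiple of $p(Tx_n/\|x_n\|,0)$ by the same $\beta\le M$ comparison of the metric $p$ already carried out in the proof of Theorem \ref{a-theorem-G2024} (each summand $\frac{2^{-k}p_k(Tx_n)}{1+p_k(Tx_n)}$ is at most $M\cdot\frac{2^{-k}p_k(Tx_n/\|x_n\|)}{1+p_k(Tx_n/\|x_n\|)}$ when $\|x_n\|\le M$, using $M\ge 1$ without loss of generality). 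Conversely $iv)\Longrightarrow iii)$ is trivial since an orthonormal sequence is bounded and orthogonal.

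\medskip

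The core of the argument is then $ii)\Longrightarrow iii)$ and $iii)\Longrightarrow i)$. For $iii)\Longrightarrow i)$: assuming $T$ sends every orthonormal sequence to a $p$-null sequence, in particular the set $\{Tx_n:n\in\mathbb{N}\}$ is relatively compact for every orthonormal sequence (a convergent sequence together with its limit is compact), so condition $iii)$ of Theorem \ref{a-theorem-G2024} holds for ${\cal T}=\{T\}$, whence $T(B_\mathcal{H})$ is relatively compact, which is condition $i)$ here. For $ii)\Longrightarrow iii)$: condition $ii)$ here says $T({\cal E})$ is relatively compact for every orthonormal basis ${\cal E}$, which is precisely condition $ii)$ of Theorem \ref{a-theorem-G2024} for ${\cal T}=\{T\}$; hence $T(B_\mathcal{H})$ is relatively compact. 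Now let $(x_n)$ be any orthonormal sequence; it is weakly null in $\mathcal{H}$, so $Tx_n\convw 0$ in ${\cal F}$ (since $T$ is weak-to-weak continuous, being bounded by Theorem \ref{set bounded to TVS} — or simply because $T$ is $\tau$-to-$\tau$ continuous and takes the weakly convergent net $x_n\to 0$ to a net whose only weak cluster point can be $0$). But $(Tx_n)$ lies in the relatively compact set $T(B_\mathcal{H})$, so every subsequence has a $p$-convergent sub-subsequence, and the only possible limit is the weak limit $0$; hence $p(Tx_n,0)\to 0$, giving $iii)$.

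\medskip

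\textbf{The main obstacle} is the step $ii)\Longrightarrow iii)$, specifically justifying that $Tx_n\to 0$ and not merely that $(Tx_n)$ is relatively compact: one must combine relative compactness of $T(B_\mathcal{H})$ with the fact that $0$ is the unique weak cluster point of $(Tx_n)$. This uses that $\tau$ is Hausdorff (so $p$-limits are unique and agree with weak limits when both exist) and that weak convergence of $x_n\to 0$ in $\mathcal{H}$ forces $\langle Tx_n,\varphi\rangle\to 0$ for every $\varphi\in{\cal F}'$; the latter holds because $\varphi\circ T$ is a continuous linear functional on $\mathcal{H}$ hence given by an inner product against a fixed vector. I would state this weak-to-weak continuity of $T$ explicitly as an intermediate observation before running the compactness-plus-uniqueness-of-cluster-point argument; everything else is a direct appeal to Theorem \ref{a-theorem-G2024} and the metric comparison already performed there.
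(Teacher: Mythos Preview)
Your proposal is correct and follows essentially the same route as the paper: Theorem~\ref{a-theorem-G2024} with ${\cal T}=\{T\}$ handles the equivalence with $i)$, and the key step $ii)\Longrightarrow iii)$ is done in both cases by combining relative compactness of $\{Tx_n\}$ with the fact that an orthonormal sequence is weakly null and $T$ is $\tau$-continuous (via Theorem~\ref{set bounded to TVS}), forcing the unique $p$-cluster point to be $0$. The only cosmetic difference is that for $ii)\Longrightarrow iii)$ the paper extends $(u_n)$ to an orthonormal basis and invokes $ii)$ directly to get relative compactness of $\{Tu_n\}$, whereas you first pass through $i)$ via Theorem~\ref{a-theorem-G2024}; both arguments are equivalent.
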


\begin{proof}
The equivalence $i)\Longleftrightarrow ii)$ is a special case of Theorem \ref{a-theorem-G2024}, and the equivalence $iii)\Longleftrightarrow iv)$ is trivial.

\medskip
$ii)\Longrightarrow iii)$:\
By Theorem \ref{set bounded to TVS}, $T$ is $p$-continuous (and hence $\tau$-continuous).
Let $(u_n)$ be an orthonormal sequence of $\mathcal{H}$.
On the way to a contradiction, suppose $p(Tu_n,0)\not\to 0$.
By passing to a subsequence, we may suppose $p(Tu_n,0)\ge M$ for all $n$ and some $M>0$.
Extending $\{u_n\}_{n\in\mathbb{N}}$ to an orthonormal basis of $\mathcal{H}$, and involving $ii)$,
we see $\{Tu_n\}_{n\in\mathbb{N}}$ is relatively compact in ${\cal F}$, and hence $(Tu_n)$ has a $p$-convergent subsequence,
say $\lim\limits_{j\to\infty}p(Tu_{n_j},y)=0$ for some $y\in {\cal F}$. 
As $(u_{n_j})$ is orthonormal, $u_{n_j}\convw 0$. Since $T$ is $\tau$-continuous, $Tu_{n_j}\xrightarrow[]{\sigma({\cal F},{\cal F}')}0$. 
It follows from $Tu_{n_j}\xrightarrow[]{\tau} y$ that $Tu_{n_j}\xrightarrow[]{\sigma({\cal F},{\cal F}')}y$.
Therefore, $y=0$, and hence $\lim\limits_{j\to\infty}p(Tu_{n_j},0)=0$, contradicting $p(Tu_n,0)\ge M>0$ for all $n$.

\medskip
The implication $iii)\Longrightarrow ii)$ is trivial.
\end{proof}

Note that Proposition \ref{oc-prop} follows directly from Theorem \ref{theorem-G2024}.
It is worth noting that the statements of Theorems \ref{a-theorem-G2024} and \ref{theorem-G2024} are also valid for operators
from a Hilbert space to a locally convex space whose topology is metrizable. 

\bigskip
\section{Compactness along disjoint bounded sequences}

\hspace{4mm} 
In compare with Lemma \ref{bounded to TVS}, the case of operators acting from Banach lattices to locally convex spaces is more complicated. 
We do not know reasonable (certainly, the domain must be infinite-dimensional) conditions for automatic topological boundedness
of an operator $T:E\to X$ in terms of $\tau$-boundedness of $T$ along \text{\rm d-bdd}-sequences in $E$.
Recently, an example of a discontinuous linear functional on $\ell^p$ which is bounded along \text{\rm d-bdd}-sequences
was constructed by Storozhuk \cite{S2025}. Futhermore, the lattice-version of Theorem \ref{a-theorem-G2024} if failed for compactness along \text{\rm d-bdd}-sequences, 
even for bounded operators from a Banach lattice to a Banach space \cite[Example 1]{EEG2025-2}.

\begin{definition}\label{d-comp-def}
{\em (cf., \cite[Definition 4]{EEG2025-2})
A topologically bounded operator from a Banach lattice to a locally convex space is {\em\text{\rm d}-compact} 
whenever it carries \text{\rm d-bdd}-sequences to relatively compact sets.}
\end{definition}

\noindent
Examples like \cite[Example 1]{EEG2025-2} motivate the question: for which $E$ and $X$ there is a \text{\rm d}-compact operator 
$T:E\to X$ that is not compact?  The compactness along \text{\rm d-bdd}-sequences is equivalent to several kind of compactness along disjoint sets. 

\begin{proposition}\label{d-K-elem-1}
{\em (\cite[Proposition 2]{EEG2025-2}) Let $T:E\to X$ be bounded. The following are equivalent.
\begin{enumerate}[$i)$]
\item
$T$ is \text{\rm d}-compact.
\item
$T$ compact along \text{\rm d-bdd}-sequences of $E_+$.
\item
$T(D)$ is relatively compact for every disjoint normalized subset $D$ of $B_E$.
\item
$T(D)$ is relatively compact for every disjoint normalized subset $D$ of $E_+$.
\end{enumerate}}
\end{proposition}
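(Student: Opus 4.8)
The plan is to close the cycle $i)\Rightarrow ii)\Rightarrow iv)\Rightarrow iii)\Rightarrow i)$, relying throughout on two elementary facts about the Banach (hence complete and metrizable) space $X$: first, that a finite sum, and therefore a difference, of relatively compact sets is relatively compact, since it lies inside a continuous image of a product of compact sets; and second, that for relatively compact $K\subseteq X$ and $M>0$ the set $[0,M]\cdot K$ is relatively compact, being a continuous image of the compact set $[0,M]\times\overline{K}$. With these in hand, $i)\Rightarrow ii)$ is immediate, since $T$ is assumed bounded and a \text{\rm d-bdd}-sequence in $E_+$ is in particular a \text{\rm d-bdd}-sequence in $E$.

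For $ii)\Rightarrow iv)$ I would argue sequentially: given a disjoint normalized subset $D$ of $E_+$, it suffices (as $X$ is a Banach space) to produce a convergent subsequence of an arbitrary sequence $(Td_n)$ with $d_n\in D$; if the $d_n$ take only finitely many values this is clear, and otherwise one passes to a subsequence of pairwise distinct $d_n$, which then forms a \text{\rm d-bdd}-sequence in $E_+$, so that $\{Td_n:n\in\mathbb{N}\}$ is relatively compact by $ii)$. For $iv)\Rightarrow iii)$ I would take a disjoint normalized $D\subseteq B_E$ and pass to the set $D_+\subseteq E_+$ consisting of the normalizations of the nonzero positive and negative parts of the members of $D$; since disjointness is inherited by dominated elements and is scale invariant, and $d^+\perp d^-$, this $D_+$ is again disjoint and normalized, so $K:=\overline{T(D_+)}$ is compact by $iv)$, and then $\|d^{\pm}\|\le\|d\|\le 1$ together with $Td=\|d^+\|\,T(d^+/\|d^+\|)-\|d^-\|\,T(d^-/\|d^-\|)$ (a term read as $0$ when the corresponding part vanishes) gives $T(D)\subseteq [0,1]K-[0,1]K$, a compact set. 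Finally, for $iii)\Rightarrow i)$ I would take a \text{\rm d-bdd}-sequence $(x_n)$ in $E$ with $\sup_n\|x_n\|\le M$, apply $iii)$ to the disjoint normalized subset of $B_E$ formed by the normalized nonzero parts $x_n^{\pm}/\|x_n^{\pm}\|$, and conclude that $\{Tx_n:n\in\mathbb{N}\}\subseteq [0,M]K-[0,M]K$ with $K$ compact; hence $\{Tx_n:n\in\mathbb{N}\}$ is relatively compact and, $T$ being bounded, $T$ is \text{\rm d}-compact.

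I do not expect a genuine obstacle, as the argument is essentially formal. The only points needing care are checking that the normalized positive and negative parts of a disjoint family still form a disjoint family, and bookkeeping the members whose positive or negative part vanishes so that every normalization that occurs is actually a unit vector; beyond the sequential characterization of relative compactness in the Banach space $X$, no metric estimate enters.
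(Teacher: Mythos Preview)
Your argument is correct. The paper does not prove Proposition~\ref{d-K-elem-1} directly (it is quoted from \cite{EEG2025-2}), but its proof of the more general Theorem~\ref{a-theorem-EEG2025-2} uses essentially the same ingredients you do: a sequential reduction from subsets to sequences, and a decomposition of each element into its positive and negative parts followed by normalization. The only cosmetic differences are that the paper runs a slightly different chain of implications and packages the scaling step via the circled convex hull $\text{\rm cch}(\cdot)$ rather than your explicit $[0,M]K-[0,M]K$; your formulation is arguably more elementary since it avoids invoking that the closed convex hull of a compact set is compact.
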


\noindent
We are going to extend Proposition \ref{d-K-elem-1} for operators from Banach lattices to Fr\'echet spaces as follows.

\begin{theorem}\label{a-theorem-EEG2025-2}
{\em Let ${\cal T}$ be a set of topologically bounded operators from $E$ to $({\cal F},\tau,p)$. The following conditions are equivalent.
\begin{enumerate}[$i)$]
\item
${\cal T}D$ is relatively compact in ${\cal F}$ for every disjoint bounded subset $D$ of $E$.
\item
${\cal T}D$ is relatively compact in ${\cal F}$ for every disjoint normalized $D\subseteq E_+$.
\item
For every \text{\rm d-bdd}-sequence $(x_n)$ in $E$, the set $\{Tx_n: T\in{\cal T}, n\in\mathbb{N}\}$ is relatively compact in ${\cal F}$.
\item
For every \text{\rm d-bdd}-sequence $(x_n)$ in $E_+$, the set $\{Tx_n: T\in{\cal T}, n\in\mathbb{N}\}$ is relatively compact in ${\cal F}$.
\end{enumerate}}
\end{theorem}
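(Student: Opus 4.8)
The plan is to mirror the architecture of the proof of Theorem~\ref{a-theorem-G2024}, with the Gram--Schmidt orthogonalization step replaced by a disjointification step and the role of weak-null subsequences replaced by a combinatorial passage to ``almost disjoint'' subsequences. First I would dispose of the easy implications: $i)\Longrightarrow iii)$ and $ii)\Longrightarrow iv)$ are trivial (singletons and ranges of sequences are special cases of the relevant disjoint sets), and $iii)\Longleftrightarrow iv)$ follows by splitting each $x_n$ into positive and negative parts, which are themselves disjoint and bounded, so that a \text{\rm d-bdd}-sequence in $E$ yields one in $E_+$ whose relative compactness of $\cal T$-image forces the original. Similarly $i)\Longleftrightarrow ii)$ should follow by the same positive/negative-part device together with the observation that a disjoint bounded set is, up to scaling by the uniform bound, a disjoint set inside a ball; here one can invoke the structure already used in Proposition~\ref{d-K-elem-1} (the equivalence of the four conditions there), applied ``fiberwise'' over ${\cal T}$ once we know the uniform-compactness conclusion.

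The substantive implication is $iii)\Longrightarrow i)$ (equivalently $iv)\Longrightarrow ii)$). I would argue by contradiction: if ${\cal T}D$ is not relatively compact for some disjoint bounded $D$, then, since ${\cal F}$ is a complete metric space, there are $\alpha>0$, a sequence $(T_n)$ in ${\cal T}$, and elements $d_n\in D$ with $p(T_nd_n,T_md_m)\ge 2\alpha$ for all $n\ne m$. The elements $(d_n)$ are pairwise disjoint and norm-bounded, so $(d_n)$ is already a \text{\rm d-bdd}-sequence; passing to positive parts if working with $iv)$, we directly contradict $iii)$ once we check that the two-sided separation $p(T_nd_n,T_md_m)\ge 2\alpha$ is incompatible with $\{Td_n : T\in{\cal T},\, n\in\mathbb N\}$ being relatively compact. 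Indeed, relative compactness of that set would, by total boundedness in the metric $p$, force the sequence $(T_nd_n)_n$ (which lies in it) to contain a $p$-Cauchy subsequence, contradicting $p(T_nd_n,T_md_m)\ge 2\alpha$. So in fact the proof of $iii)\Longrightarrow i)$ is \emph{shorter} than its Hilbert-space analogue: there is no need for an equicontinuity argument or a Gram--Schmidt correction, because we never have to repair $(d_n)$ into a cleaner sequence --- a disjoint bounded sequence needs no repairing, unlike an ``almost orthonormal'' one.

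The one point that requires care, and which I expect to be the main (mild) obstacle, is the extraction of the separated pair $(T_n d_n)$: failure of relative compactness of ${\cal T}D$ in the complete metric space $({\cal F},p)$ gives a sequence in ${\cal T}D$ with no $p$-Cauchy subsequence, i.e. a sequence $(T_n d_n)$ with $\inf_{n\ne m} p(T_n d_n, T_m d_m) > 0$ after passing to a subsequence; one must note that completeness of $p$ is exactly what turns ``not relatively compact'' into ``admits a uniformly $p$-separated subsequence'' (without completeness one would only get failure of total boundedness, but here $\cal F$ is Fréchet so $p$ is complete and the two coincide). After that, the scaling normalization $d_n \mapsto d_n/\|d_n\|$ (discarding any zero terms, of which there can be only finitely many since the $T_n d_n$ are $2\alpha$-separated hence nonzero) lands us in the disjoint normalized setting of $iv)$ and closes the contradiction. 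I would also remark, as the paper does after Theorem~\ref{theorem-G2024}, that metrizability of $\tau$ together with completeness is all that is used, so the statement extends to complete metrizable locally convex codomains; but I would not belabor this.
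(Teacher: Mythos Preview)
Your proposal is essentially correct and actually a bit leaner than the paper's argument, but the two proofs are organized differently and there are two small technical points you gloss over.

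\textbf{How the paper argues.} The paper does \emph{not} prove $iii)\Rightarrow i)$ directly. It takes the trivial implications $i)\Rightarrow ii)$ and $i)\Rightarrow iii)\Rightarrow iv)$, then closes the cycle with two nontrivial steps: $ii)\Rightarrow i)$ via positive/negative parts and \emph{circled convex hulls} (using that in a Fr\'echet space the closed circled convex hull of a relatively compact set is compact), and $iv)\Rightarrow ii)$ via a sequential extraction: given a sequence $(y_n)$ in ${\cal T}D$ with $D\subseteq E_+$ disjoint normalized, write $y_n=T_nx_n$, set $D_0=\{x_n:n\}$ (possibly finite), and apply $iv)$ to an enumeration of $D_0$. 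Your $iii)\Rightarrow i)$ is precisely this last idea transplanted one level up, and by doing so you avoid the convex-hull step entirely; that is a genuine simplification and a legitimate alternative route.

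\textbf{Two points to tighten.} First, your claim that $ii)\Rightarrow iv)$ is ``trivial'' is not quite right: a \text{\rm d-bdd}-sequence in $E_+$ is bounded but not normalized, so you must normalize and then observe that $Tx_n=\|x_n\|\cdot T(x_n/\|x_n\|)$ lies in $[0,M]\cdot K$ with $K$ relatively compact, whence $\{Tx_n:T\in{\cal T},n\}$ is relatively compact because the scalar-multiplication map $[0,M]\times\overline{K}\to{\cal F}$ is continuous with compact domain. Second, in your $iii)\Rightarrow i)$ the extracted $d_n\in D$ need \emph{not} be distinct (the separation comes from the $T_n$'s, not the $d_n$'s), so ``$(d_n)$ is already a \text{\rm d-bdd}-sequence'' can fail literally when a nonzero element repeats. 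The fix is exactly what the paper does in its $iv)\Rightarrow ii)$: pass to the \emph{set} $D_0=\{d_n:n\}$, enumerate it (padding with zeros if finite) to get a genuine \text{\rm d-bdd}-sequence $(e_k)$, and note that $\{Te_k:T\in{\cal T},k\}\supseteq\{T_nd_n:n\}$ is then $2\alpha$-separated, contradicting $iii)$.
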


\begin{proof}
Without lost of generality, assume $\dim(E)=\infty$.
Implications $i)\Longrightarrow ii)$ and $i)\Longrightarrow iii)\Longrightarrow iv)$ are trivial. So, it suffices to prove $ii)\Longrightarrow i)$ and $iv)\Longrightarrow ii)$.

\medskip
$ii)\Longrightarrow i)$:\
Let $D$ be a disjoint subset of $E$. Without lost of generality, $D\subseteq B_E$. Denote
$D_1=\{x_+: x\in D\ \& \ x_+\ne 0\}$ and $D_2=\{x_-: x\in D\ \& \ x_-\ne 0\}$.
If one them is empty, say $D_2=\emptyset$, then
$$
   {\cal T}D\subseteq{\cal T}D_1\cup\{0\}\subseteq\text{\rm cch}\bigl(\bigl\{T(\|x\|^{-1}x)\bigl\}_{T\in{\cal T}; x\in D_1}\bigl),
   \eqno(1)
$$
where $\text{\rm cch}(S)$ is the circled convex hull of $S$. The set $\bigl\{T(\|x\|^{-1}x)\bigl\}_{T\in{\cal T}; x\in D_1}$ is relatively compact in ${\cal F}$ due to $ii)$,
and hence ${\cal T}D$ is relatively compact by (1).

If $D_1$ and $D_2$ are both nonempty then $D\subseteq D_1- D_2$, and hence
$$
   {\cal T}D\subseteq\text{\rm cch}\bigl(\bigl\{T(\|x\|^{-1}x)\bigl\}_{T\in{\cal T}; x\in D_1}\bigl)+
   \text{\rm cch}\bigl(\bigl\{T(\|x\|^{-1}x)\bigl\}_{T\in{\cal T}; x\in D_2}\bigl).
   \eqno(2)
$$
It follows from (2) that ${\cal T}D$ is relatively compact in ${\cal F}$.

\medskip
$iv)\Longrightarrow ii)$:\
Let $(y_n)$ be a sequence in ${\cal T}D$, where $D$ is a disjoint normalized subset of $E_+$.
If $y_n=0$ for infinitely many $n$ then $(y_n)$ has a norm-null subsequence.
If $y_n=0$ only for finitely many $n$, by removing them, we may suppose all $y_n\ne 0$. For each $n$, pick $x_n\in D$ and $T_n\in{\cal T}$
with $y_n=T_nx_n$ and denote $D_0=\bigl\{x_n: n\in\mathbb{N}\bigl\}$. 
Then $D_0$ is a disjoint normalized countable (maybe even finite) subset of $E_+$.
By~$iv)$, ${\cal T}D_0$ is relatively compact. Since the sequence 
$(y_n)=(T_nx_n)$ lies in ${\cal T}D_0$, the sequence $(y_n)$ contains a $p$-convergent subsequence, and hence ${\cal T}D$ is relatively compact in ${\cal F}$.
\end{proof}

An important case of Definition \ref{d-comp-def} occurs (see \cite[Subsection 2.3]{EEG2025-2}) when a locally convex space in the codomain 
is a Banach space with its weak topology. The corresponding operators are called \text{\rm d}-weakly compact accordingly to \cite{EEG2025-2}. 
For a \text{\rm d}-weakly compact operator that is not weakly compact, we refer to \cite[Example 2]{EEG2025-2}.

\addcontentsline{toc}{section}{KAYNAKLAR}

\bibliographystyle{plain}
\end{document}